\theoremstyle{plain}
\newtheorem{theorem}{Theorem}[section]
\newtheorem*{theorem*}{Theorem}
\newtheorem{lemma}[theorem]{Lemma}
\newtheorem{corollary}[theorem]{Corollary}
\newtheorem*{corollary*}{Corollary}
\theoremstyle{definition}
\newtheorem{definition}[theorem]{Definition}
\title[Positive Hahn-Banach separation theorems]{A solution to Haagerup's problem and positive Hahn-Banach separation theorems in operator algebras}
\author[I. Choi]{Ikhan Choi}
\address{Graduate School of Mathematical Sciences\\The University of Tokyo\\3-8-1 Komaba Meguro-ku Tokyo 153-8914, Japan}
\email{choi@ms.u-tokyo.ac.jp}
\subjclass[2020]{46L05, 46L10, 47L50}
\keywords{C$^*$-algebras, von Neumann algebras, normal weights}
\begin{document}

\begin{abstract}
We affirmatively resolve a question posed by Uffe Haagerup in 1975 on the positive version of the bipolar theorem on the dual spaces of C$^*$-algebras.
As a direct consequence, we obtain a complete set of four positive Hahn-Banach separation theorems on von Neumann algebras, their preduals, C$^*$-algebras, and their duals.
Furthermore, with the idea used to solve the problem, we simplify Haagerup's original solution to Dixmier's problem on normal weights.
\end{abstract}

\maketitle

\section{Introduction}

In this paper, we prove the following theorem.
\begin{theorem*}
Let $M$ be a von Neumann algebra, and let $A$ be a C$^*$-algebra.
\begin{enumerate}
\item[(1)] If $F$ is a $\sigma$-weakly closed convex hereditary subset of $M^+$, then for any $x'\in M^+\setminus F$ there exists $\omega\in M_*^+$ such that $\omega(x')>1$ and $\omega(x)\le1$ for all $x\in F$.
\item[(2)] If $F_*$ is a norm closed convex hereditary subset of $M_*^+$, then for any $\omega'\in M_*^+\setminus F_*$ there exists $x\in M^+$ such that $\omega'(x)>1$ and $\omega(x)\le1$ for all $\omega\in F_*$.
\item[(3)] If $F$ is a norm closed convex hereditary subset of $A^+$, then for any $a'\in A^+\setminus F$ there exists $\omega\in A^{*+}$ such that $\omega(a')>1$ and $\omega(a)\le1$ for all $a\in F$.
\item[(4)] If $F^*$ is a weakly$^*$ closed convex hereditary subset of $A^{*+}$, then for any $\omega'\in A^{*+}\setminus F^*$ there exists $a\in A^+$ such that $\omega'(a)>1$ and $\omega(a)\le1$ for all $\omega\in F^*$.
\end{enumerate}
\end{theorem*}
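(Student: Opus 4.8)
The plan is to reduce each of the four assertions to the statement that a certain convex set is \emph{closed}, and then to extract the separating functional by biconjugation. I give the reduction for (1); the other three run \emph{verbatim} with $(M^{sa},M_*^{sa})$ replaced by $(M_*^{sa},M^{sa})$, by $(A^{sa},A^{*,sa})$, and by $(A^{*,sa},A^{sa})$, and with the $\sigma$-weak topology replaced by the norm, norm, and weak$^*$ topology respectively (for (3) one may instead pass to the bidual $A^{**}$).

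Fix a nonempty $\sigma$-weakly closed convex hereditary $F\subseteq M^+$, so $0\in F$. Its gauge $q_F(x)=\inf\{\lambda>0:x\in\lambda F\}$ is an order-preserving, positively homogeneous, subadditive, $\sigma$-weakly lower semicontinuous map $M^+\to[0,\infty]$ with $F=\{x\in M^+:q_F(x)\le1\}$, and for $\omega\in M_*^+$ the condition ``$\omega(x)\le1$ for all $x\in F$'' is exactly ``$\omega\le q_F$ on $M^+$''. Extend $q_F$ to $M^{sa}$ by $\bar q_F(z)=\inf\{q_F(y):y\in M^+,\ y\ge z\}$; this is again order-preserving, positively homogeneous and subadditive, it restricts to $q_F$ on $M^+$, and it vanishes on $-M^+$. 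I claim that \emph{(1) holds as soon as $F-M^+$ is $\sigma$-weakly closed in $M^{sa}$}. Indeed, each sublevel set of $\bar q_F$ is then $\sigma$-weakly closed (it equals $\bigcap_{\mu>\lambda}\mu(F-M^+)$), so $\bar q_F$ is a proper, $\sigma$-weakly lower semicontinuous, sublinear functional; by the Fenchel--Moreau theorem it is the supremum of the $\sigma$-weakly continuous affine functionals it dominates, and, being positively homogeneous with $\bar q_F(0)=0$, of the linear ones, i.e.\ of the $\phi\in M_*^{sa}$ with $\phi\le\bar q_F$. Every such $\phi$ is automatically positive, since $\phi\le\bar q_F\equiv0$ on $-M^+$; and for positive $\phi$ the inequality $\phi\le\bar q_F$ is equivalent to $\phi|_F\le1$. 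Thus for $x'\in M^+\setminus F$ we have $\bar q_F(x')=q_F(x')>1$, so some $\phi\in M_*^+$ with $\phi|_F\le1$ satisfies $\phi(x')>1$, and this $\phi$ is the desired $\omega$.

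It remains to prove $F-M^+$ is $\sigma$-weakly closed. As it is convex, the Krein--\v{S}mulian theorem reduces this to the $\sigma$-weak closedness --- hence compactness --- of $(F-M^+)\cap rB_M$ for every $r>0$: that is, given a bounded net $z_i=y_i-w_i$ with $y_i\in F$, $w_i\in M^+$, and $z_i\to z$ $\sigma$-weakly, one must find $y\in F$ with $z\le y$ (then $z=y-(y-z)\in F-M^+$). Note $y_i\ge z_i$, and that heredity is precisely the statement $y^{1/2}[0,1]_M\,y^{1/2}\subseteq F$ for $y\in F$, i.e.\ stability of $F$ under such compressions. The tempting move is to replace $y_i$ by $y_i^{1/2}\bigl((y_i^{-1/2}z_iy_i^{-1/2})_+\bigr)y_i^{1/2}$, which lies in $F$ and dominates $z_i$; but its norm is not controlled by $\sup_i\|z_i\|$ --- already in $2\times2$ matrices --- so $(y_i)$ cannot be tamed so crudely. (This is the same difficulty as the failure, already for $2\times2$ matrices, of the positive part of a Hahn--Banach functional separating $x'$ from $F$ to remain a separating functional; positivity cannot be grafted on afterwards, which is why $\bar q_F$ was built monotone from the outset, at the cost of needing $F-M^+$ closed.)

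The crux is therefore a \emph{bounded trimming} step: given $y\in F$ and $z\in M^{sa}$ with $z\le y$, to produce $y'\in F$ with $z\le y'$ and $\|y'\|$ bounded in terms of $\|z\|$ alone; a $\sigma$-weak cluster point of the trimmed net then lies in the closed set $F$ and dominates $z$. I expect this to be the heart of the paper and to require genuinely operator-algebraic input --- plausibly a reduction to $\sigma$-finite $M$ (compressing to a corner carrying a faithful normal state, which is harmless because the separating data are normal functionals and every positive element is a $\sigma$-weak limit of its $\sigma$-finite compressions) followed by an approximation argument in the spirit of Haagerup's solution of Dixmier's problem on normal weights; this is also what lets the same ideas simplify that solution. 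Once $F-M^+$ is known closed, the Fenchel--Moreau step above finishes (1), and the parallel arguments finish (2)--(4).
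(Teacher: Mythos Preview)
Your reduction of each statement to the closedness of $F-E^+$ is correct and is exactly the paper's starting point (the paper phrases it as $(\overline{F-E^+})^+\subset F$ via the real bipolar theorem, which is equivalent to your Fenchel--Moreau formulation). You also pinpoint the genuine obstruction: from $z_i\le y_i\in F$ with $(z_i)$ bounded, one cannot simply bound $(y_i)$. But you stop precisely where the work begins. The paper's device is the family $f_\delta(t)=t(1+\delta t)^{-1}$: since $f_\delta$ is operator monotone and bounded above by $\delta^{-1}$, one has $f_\delta(y_i)\in F$ with $\|f_\delta(y_i)\|\le\delta^{-1}$, and for $\|z_i\|\le 2^{-1}\delta^{-1/4}$ the elementary estimate $t\le f_\delta(t)+\delta^{1/2}$ gives $z_i\le f_\delta(y_i)+\delta^{1/2}$. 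This is \emph{approximate} trimming with an error $\delta^{1/2}$; the exact bounded trimming you ask for is not established (and your $2\times2$ observation suggests it should not be expected). The error term forces the introduction of an auxiliary set $G$ of elements admitting such approximate majorants, and one shows $F-M^+\subset G$, $G^+\subset F$, and $\overline G\subset G$ via Krein--\v Smulian. Your speculated reduction to $\sigma$-finite $M$ is not used.

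The more serious gap is the claim that (2) and (4) ``run verbatim''. In those cases the objects to be trimmed are \emph{functionals}, and there is no functional calculus $\omega\mapsto f_\delta(\omega)$ on $M_*$ or $A^*$. The paper's mechanism is to pass through the commutant Radon--Nikodym map $\theta:\pi(M)'\to M_*$ associated to a reference functional $\psi$: one forms $f_\delta(\theta^{-1}(\varphi))$ in the commutant and pushes back. For (2) a single $\psi$ built from the sequence $(\omega_n)$ suffices, and Krein--\v Smulian is avoided. For (4) --- which is the open problem of Haagerup that the paper resolves --- no single $\psi$ works; the decisive new idea is to let the reference functional $\psi_i$ depend on the net index, and to build this dependence into the definition of the auxiliary set $G^*$. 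None of this is parallel to (1), and treating it as such is exactly why the problem remained open.
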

\noindent Recall that a subset $F$ of the positive cone $E^+$ of a partially ordered real vector space $E$ is called \emph{hereditary} if $x\in F$ whenever $0\le x\le y$ in $E$ with $y\in F$, or equivalently $F=(F-E^+)^+$.
Here, we use $F_*$ and $F^*$ just for symbolic notations to emphasize that they are contained in $M_*$ and $A^*$ respectively, so they should not be perceived as a star operation of another set $F$.

The first three parts of the above theorem were originally proved by Uffe Haagerup in his master's thesis \cite{MR380438}.
In his monumental paper, he answered Dixmier's question \cite{MR59485} by proving that several conditions for normality of weights on a von Neumann algebra are in fact all equivalent, and posed three problems on normal weights, 1.10, 1.11, and 2.7.
Among these, the last one is the problem that asks whether the item (4) holds, and the solution to this problem is the main result of this paper.
The other two problems are also of independent interest and, as far as the author is aware, remain open.

The first part (1) plays a major role in the proof of that $\sigma$-weakly lower semi-continuous weight of a von Neumann algebra is given by the pointwise supremum of a set of positive normal linear functionals.
This is a second half of Dixmier's problem on normal weights.
The statements of the above collection of theorems in his original paper are written in terms of positive versions of the bipolar theorem $F^{r+r+}=F$ instead of the Hahn-Banach separation theorem as presented above, where the positive polar $F^{r+}$ can be defined as the positive part of the real polar 
\[F^{r+}:=F^r\cap E^{*+}=\{x^*\in E^{*+}:\sup_{x\in F}x^*(x)\le1\}\]
in an ordered real locally convex space $E$.
They are easily checked to be equivalent using the usual Hahn-Banach separation theorem.

Although the first three are already known results by Haagerup on the contrary to (4), we provide different proofs of them in order to motivate the idea of the proof of (4).
In more detail, when Haagerup proved (1), he heavily used the $\sigma$-strong topology and the strong continuity of continuous bounded functions, but such a nice dual topology for the dual pair of a von Neumann algebra and its $\sigma$-weak dual has no analogue in the dual of a C$^*$-algebra.
In this context, we provide a new proof of (1) using only the $\sigma$-weak topology, and extend the idea to prove (4) within the weak$^*$ topology.
This notably simplifies Haagerup's solution of the Dixmier problem by removing the usage of the strong operator topology from the proof of (1).
On the other hand, Haagerup used (1) to prove (2), but we will show that the Krein-\v Smulian theorem, which is essential in the proof of (1) and (4), is not required in (2) by proving it directly.
The part (3) is a straightforward corollary of (1), and it gives a different proof for the Combes theorem \cite{MR236721}, the C$^*$-algebraic counterpart of Dixmier's problem.
See Theorem 1.1.1~in \cite{MR4864224} for a C$^*$-algebraic proof of Combes' theorem.
Finally, combining these four results, we can also obtain the following correspondences.
\begin{corollary*}
For a von Neumann algebra $M$ and a C$^*$-algebra $A$, there are one-to-one correspondences
\[\begin{array}{ccccc}
\left\{\begin{tabular}{c}
normal\\subadditive\\weights on $M$
\end{tabular}\right\}&
\leftrightarrow&
\left\{\begin{tabular}{c}
$\sigma$-weakly closed\\convex hereditary\\subsets of $M^+$
\end{tabular}\right\}&
\leftrightarrow&
\left\{\begin{tabular}{c}
norm closed\\convex hereditary\\subsets of $M_*^+$
\end{tabular}\right\}
\end{array}\]
and
\[\begin{array}{ccccc}
\left\{\begin{tabular}{c}
lower semi-continuous\\subadditive\\weights on $A$
\end{tabular}\right\}&
\leftrightarrow&
\left\{\begin{tabular}{c}
norm closed\\convex hereditary\\subsets of $A^+$
\end{tabular}\right\}&
\leftrightarrow&
\left\{\begin{tabular}{c}
weakly$^*$ closed\\convex hereditary\\subsets of $A^{*+}$
\end{tabular}\right\}
\end{array}\]
\end{corollary*}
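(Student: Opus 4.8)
The plan is to exhibit each of the two displayed chains as a composite of two elementary bijections: a \emph{gauge} bijection between weights and closed convex hereditary subsets of a positive cone, and a \emph{positive polarity} bijection between closed convex hereditary subsets of a cone and of its dual cone. Only the second carries any content — precisely the four separation theorems above — while the first is a routine fact about Minkowski functionals on an ordered vector space.

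I would first set up the gauge bijection, say between normal subadditive weights on $M$ and $\sigma$-weakly closed convex hereditary subsets of $M^+$ (the C$^*$-case is verbatim the same with the norm topology). To a weight $\varphi$ attach its unit sublevel set $F_\varphi:=\{x\in M^+:\varphi(x)\le1\}$, which is convex by positive homogeneity and subadditivity, hereditary because $\varphi$ is order-preserving, and $\sigma$-weakly closed because $\varphi$ is $\sigma$-weakly lower semi-continuous. Conversely, to a $\sigma$-weakly closed convex hereditary $F\subseteq M^+$ attach the gauge $\varphi_F(x):=\inf\{\lambda>0:\lambda^{-1}x\in F\}$, with the convention $\inf\emptyset:=+\infty$. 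Positive homogeneity of $\varphi_F$ is immediate; convexity of $F$ yields subadditivity of $\varphi_F$ and heredity of $F$ yields monotonicity of $\varphi_F$; and since $F$ is hereditary, $\lambda F\subseteq\mu F$ whenever $0\le\lambda\le\mu$, which together with closedness of $F$ gives $\{x\in M^+:\varphi_F(x)\le c\}=cF$ for $c>0$ and $\{x\in M^+:\varphi_F(x)=0\}=\bigcap_{c>0}cF$, so all sublevel sets of $\varphi_F$ are $\sigma$-weakly closed and $\varphi_F$ is normal. The two assignments are mutually inverse: $\varphi_{F_\varphi}(x)=\inf\{\lambda>0:\varphi(x)\le\lambda\}=\varphi(x)$ by homogeneity, and $F_{\varphi_F}=F$ is the sublevel identity at $c=1$. (The empty subset must be excluded or matched by a separate convention, since it is still convex and hereditary; any nonempty hereditary set contains $0$, so the gauge of a nonempty $F$ vanishes at $0$.)

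For the polarity bijection I would use the positive polar $F^{r+}$ of the introduction, taken in the dual pairs $(M,M_*)$ and $(A,A^*)$: for $F\subseteq M^+$, $F^{r+}\subseteq M_*^+$ consists of the $\omega$ with $\omega(x)\le1$ for all $x\in F$, symmetrically $G^{r+}\subseteq M^+$ for $G\subseteq M_*^+$, and likewise for $A$ and $A^*$. These polars are convex; they are hereditary because all elements being paired are positive; and they are closed in the required topology ($\sigma$-weak on $M^+$, norm on $M_*^+$, norm on $A^+$, weak$^*$ on $A^{*+}$) since each evaluation functional is continuous for that topology ($\omega\mapsto\omega(x)$ being norm continuous and $x\mapsto\omega(x)$ being $\sigma$-weakly, resp.\ weakly$^*$, continuous) and the cones are closed. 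One always has $F\subseteq F^{r+r+}$; the reverse inclusion for a $\sigma$-weakly closed convex hereditary $F\subseteq M^+$ is precisely part~(1): an $x'\notin F$ yields $\omega\in F^{r+}$ with $\omega(x')>1$, so $x'\notin F^{r+r+}$. In the same way parts~(2), (3), (4) give $G^{r+r+}=G$ for such $G\subseteq M_*^+$, $F^{r+r+}=F$ for such $F\subseteq A^+$, and $G^{r+r+}=G$ for such $G\subseteq A^{*+}$. Composing with the gauge bijection produces the two displayed correspondences.

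I do not expect a genuine obstacle: all the weight lies in the four separation theorems, already established, and the corollary is their formal packaging. The only points needing care are bookkeeping — keeping each topology matched to its dual pair, verifying that heredity is transported correctly through the gauge (this is exactly where order-preservation of a subadditive weight enters, and dually where positivity of the cones $M^+$, $M_*^+$, $A^+$, $A^{*+}$ enters the polar), and disposing of the degenerate empty set as noted.
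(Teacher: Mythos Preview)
The paper does not supply a proof of this corollary: it is stated in the introduction as an immediate consequence of the four separation theorems, with the sentence ``combining these four results, we can also obtain the following correspondences'' standing in lieu of an argument. Your proposal is precisely the natural elaboration of that remark --- the gauge/Minkowski-functional bijection between subadditive weights and their unit sublevel sets, composed with the positive-polar bijection $F\mapsto F^{r+}$ whose invertibility on the relevant classes is exactly the content of Theorems~3.1--3.4 --- and it is correct as written, modulo the bookkeeping caveats you already flag (the empty set, and that monotonicity of the weight is either part of the definition of ``subadditive weight'' or must be added to make the heredity direction go through).
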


In Section 2, we introduce some preliminaries for the main proofs, most of which are well-known.
The notations introduced in Section 2 will be repeatedly used in Section 3, where we give the proofs of the four positive Hahn-Banach separation theorems.

\section{Preliminaries}

\subsection{Suppression by the one-parameter family of functional calculi}

The first step of the proof is the reformulation of the theorem into an inclusion problem.
This idea is due to Haagerup.
More precisely, we can transform each positive bipolar theorem equivalently to a statement of the form $(\overline{F-E^+})^+\subset F$.
In other words, when $x_i$ is a net in $F-E^+$ convergent to $x\ge0$ in a partially oredered real locally convex space $E$, there is a majorizing net $y_i\in F$ satisfying $x_i\le y_i$, we then need to find $y\in F$ such that $x\le y$ in order to deduce $x\in F$ with the hereditarity of $F$.
An obstacle of this argument is that a net $y_i$ has of course no limit points in general.
To address this, we suppress the net $y_i$ to make it bounded with the following one-parameter family of real functions.
\begin{definition}
For $\delta>0$, we define a function $f_\delta:(-\delta^{-1},\infty)\to\mathbb{R}$ such that
\[f_\delta(t):=t(1+\delta t)^{-1},\qquad t>-\delta^{-1}.\]
\end{definition}
\noindent It has many interesting properties such as operator monotonicity and concavity, strong convergence to the identity as $\delta\to0$, and the semi-group property in the sense that $f_\delta(f_{\delta'}(t))=f_{\delta+\delta'}(t)$ on a suitable domain of $t\in\mathbb{R}$.
We will only use the operator monotonicity and the boundedness from above, given each fixed $\delta>0$.
With these functions, for example in the situation of (1), we can suppress the net $y_i$ to define a bounded net $f_\delta(y_i)$ of positive elements.
This net is bounded above by $y_i\in F$ so that $f_\delta(y_i)\in F$ by the hereditarity of $F$, and we can take a $\sigma$-weakly convergent subnet to get $f_\delta(x)\in F$ from $f_\delta(x_i)\le f_\delta(y_i)\in F$ by the closedness and the hereditarity of $F$.
Then, the limit $\delta\to0$ and the closedness of $F$ will give $x\in F$.

However, when we try to apply this strategy, we encounter two technical issues.
One is that the net $x_i$ should be bounded for the net $f_\delta(x_i)$ to be well-defined for a sufficiently small but fixed $\delta>0$.
It can be overcome by Haagerup's trick involving the Krein-\v Smulian theorem.
The other is, unlike the $\sigma$-strong topology that Haagerup used, that we cannot expect the operation of taking functional calculus to be continuous with respect to the $\sigma$-weak topology.
For instance, when a self-adjoint net $x_i\in B(H)^{sa}$ satisfies an assumption $-(2\delta)^{-1}\le x_i$ for all $i$ and $x_i\to x$ $\sigma$-weakly, it does not necessarily follow $f_\delta(x_i)\to f_\delta(x)$ $\sigma$-weakly.
Our solution is to approximate the functions $f_\delta$ with affine functions.
The following lemma will be frequently used, and it allows us to approximate $f_\delta(x)$ directly with the $\sigma$-weakly convergent net $x_i$ instead of $f_\delta(x_i)$, while accompanied by small errors.
\begin{lemma}
Let $\delta>0$ and $t\in\mathbb{R}$.
\begin{enumerate}
\item[(1)] If $|t|\le2^{-1}\delta^{-\frac14}$ and $\delta\le1$, then $t\le f_\delta(t)+\delta^{\frac12}$.
\item[(2)] If $t\ge0$ and $c\ge0$, then $f_\delta(t+c)\le f_\delta(t)+c$.
\item[(3)] If $0\le t\le1$, then $(1+\delta)^{-1}t\le f_\delta(t)$.
\end{enumerate}
\end{lemma}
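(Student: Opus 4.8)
The plan is to treat all three parts by direct elementary manipulation of the explicit formula $f_\delta(t)=t(1+\delta t)^{-1}$, since at this stage these are purely scalar inequalities with no operator functional calculus involved. The only preliminary remark is that under the hypotheses of each item the point $t$ lies in the domain $(-\delta^{-1},\infty)$ on which $f_\delta$ is defined: in (1) we have $|t|\le 2^{-1}\delta^{-\frac14}\le 2^{-1}\delta^{-1}<\delta^{-1}$ since $\delta\le 1$, and in (2) and (3) we have $t\ge 0$.

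For part (1), I would compute the difference directly, $t-f_\delta(t)=t-t(1+\delta t)^{-1}=\delta t^2(1+\delta t)^{-1}$. From $|t|\le 2^{-1}\delta^{-\frac14}$ one gets $\delta t^2\le 4^{-1}\delta^{\frac12}\le \tfrac14$ and $\delta|t|\le 2^{-1}\delta^{\frac34}\le\tfrac12$, so $1+\delta t\ge\tfrac12$, hence $(1+\delta t)^{-1}\le 2$. Therefore $t-f_\delta(t)=\delta t^2(1+\delta t)^{-1}\le 2\delta t^2\le 2^{-1}\delta^{\frac12}\le\delta^{\frac12}$, which is the claim; the bound $(1+\delta t)^{-1}\le 2$ covers both signs of $t$ at once, and is even $\le 1$ when $t\ge 0$.

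For part (2), the quickest route is the mean value theorem: on $[0,\infty)$ one has $f_\delta'(s)=(1+\delta s)^{-2}\le 1$, so for $t,c\ge 0$ there is $\xi\in(t,t+c)$ with $f_\delta(t+c)-f_\delta(t)=f_\delta'(\xi)\,c\le c$; alternatively one can clear denominators and observe that the numerator of $f_\delta(t)+c-f_\delta(t+c)$ is $c\bigl((1+\delta t)^2+\delta c(1+\delta t)-1\bigr)\ge 0$. For part (3), since $0\le t\le 1$ we have $1+\delta t\le 1+\delta$, hence $f_\delta(t)=t(1+\delta t)^{-1}\ge t(1+\delta)^{-1}$. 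No genuine obstacle arises in any of the three parts; the only step demanding a little care is the bookkeeping of the exponents of $\delta$ in (1), arranged so that the error term comes out as exactly $\delta^{\frac12}$ rather than a larger constant multiple of it.
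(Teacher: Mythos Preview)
Your proof is correct and follows essentially the same elementary approach as the paper: direct manipulation of the formula $f_\delta(t)=t(1+\delta t)^{-1}$ in each part, with the same bounds $\delta t^2\le 4^{-1}\delta^{1/2}$ and $|\delta t|\le 2^{-1}\delta^{3/4}$ driving (1), a derivative comparison $f_\delta'\le 1$ driving (2), and the observation $1+\delta t\le 1+\delta$ driving (3). The only cosmetic difference is that the paper rearranges (1) to the equivalent form $\delta^{1/2}t^2-\delta t\le 1$ before bounding, and phrases (2) as ``check at $t=0$, then differentiate in $t$'' rather than via the mean value theorem.
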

\begin{proof}
(1)
Since $\delta\le1<2^{\frac43}$ implies $t\ge-2^{-1}\delta^{-\frac14}>-\delta^{-1}$, the value $f_\delta(t)$ is well-defined.
The inequality is equivalent to $\delta^{\frac12}t^2-\delta t\le1$, and it holds for $|t|\le2^{-1}\delta^{-\frac14}$.

(2)
The inequality holds for $t=0$, so we can check by differentiation with respect to $t$ for fixed $c$.

(3)
It follows from $t(1+\delta t)\le t(1+\delta)$.
\end{proof}

\subsection{Bounded commutant Radon-Nikodym derivatives}

In the proofs of (2) and (4), we need to suppress linear functionals instead of operators, so we introduce the notion of Radon-Nikodym derivatives to convert functionals to operators as follows.
\begin{definition}
Let $M$ be a von Neumann algebra, and let $\psi\in M_*^+$.
Consider the Gelfand-Naimark-Segal representation $\pi:M\to B(H)$ associated to $\psi$, together with the canonical cyclic vector $\Omega\in H$.
Then, we have a positive bounded linear map $\theta:\pi(M)'\to M_*$ defined such that
\[\theta(h)(x):=\langle h\pi(x)\Omega,\Omega\rangle,\qquad h\in\pi(M)',\ x\in M.\]
Although it is not a standard terminology, we will call this linear map $\theta$ the (bounded) \emph{commutant Radon-Nikodym map} associated to $\psi$ in this paper.
\end{definition}
Let $\theta$ be the commutant Radon-Nikodym map associated to $\psi\in M_*^+$.
When $M$ is commutative so that $\pi(M)=\pi(M)'$, the inverse map $\theta^{-1}$ assigns to a linear functional an operator, which is exactly the Radon-Nikodym derivative with respect to $\psi$ in the classical measure-theoretic sense.
We include the adjective ``commutant'' to avoid confusion with other Radon-Nikodym derivatives in the theory of operator algebras such as the one of Takesaki-Pedersen or Connes' cocycle derivatives.
In the proof of (4), we will make use of this for the enveloping von Neumann algebra $A^{**}$ of a C$^*$-algebra $A$.

The image of $\theta$ associated to $\psi$ is described by
\[\operatorname{im}\theta=\{\omega\in M_*:\text{there is $C>0$ such that $|\omega(x)|\le C\psi(x)$ for all $x\in M^+$}\},\]
and the Radon-Nikodym derivative $\theta^{-1}(\omega)\in\pi(M)'$ of $\omega\in\operatorname{im}\theta$ satisfies the bound $\|\theta^{-1}(\omega)\|\le C$, where $C$ is a constant in the above description of the image of $\theta$.
In the proof of (2), for a sequence $\omega_n\in F_*-M_*^+$, a single reference functional $\psi\in M_*^+$ is constructed such that $|\omega_n(x)|\le C\psi(x)$ for all $n$ and $x\in M^+$, however in (4), we take $\psi_i\in A^{*+}$ depending on the net $\omega_i\in F^*-A^{*+}$ to have $|\omega_i(a)|\le C\psi_i(a)$ for all $a\in A^+$ and $i$.
The dependence of $\psi_i$ on $i$ is a key idea of the proof of (4), and we can check this dependence is reflected in the definitioin of the set $G^*$ in the proof.

Let $\omega\in M_*^{sa}$.
The Jordan decomposition theorem gives a unique pair $\omega_+,\omega_-\in M_*^+$ of positive normal linear functionals such that $\omega=\omega_+-\omega_-$ and $\|\omega\|=\|\omega_+\|+\|\omega_-\|$.
The absolute value of $\omega$ is defined as $[\omega]:=\omega_++\omega_-$.
Note that we always have $|\omega(x)|\le[\omega](x)$ for all $x\in M^+$, but unless $M$ is commutative, then for $\omega\in M_*^{sa}$ and $\psi\in M_*^+$ we cannot expect $[\omega]\le\psi$ when $|\omega(x)|\le\psi(x)$ for all $x\in M^+$.
Note also that when $A$ is a C$^*$-algebra and $\omega_i$ is a net in $A^{*sa}$, we have $\omega_{i+}\to0$ in norm if $\omega_i\to0$ in norm, but we do not have $\omega_{i+}\to0$ weakly$^*$ in general if $\omega_i\to0$ weakly$^*$ in $A^*$.
For further details, see IV.3 for the commutant Radon-Nikodym map and III.4 for the Jordan decomposition theorem in \cite{MR1873025}.

\section{Proofs of theorems}

\begin{theorem}\label{1}
Let $M$ be a von Neumann algebra, and consider the dual pair $(M^{sa},M_*^{sa})$.
If $F$ is a $\sigma$-weakly closed convex hereditary subset of $M^+$, then $F=F^{r+r+}$.
Equivalently, if $x'\in M^+\setminus F$, then there is $\omega\in M_*^+$ such that $\omega(x')>1$ and $\omega(x)\le1$ for $x\in F$.
\end{theorem}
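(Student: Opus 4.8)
The plan is to follow the strategy outlined in Section~2, reducing the separation statement to the inclusion $(\overline{F-M^+})^+\subset F$, where the closure is taken in the $\sigma$-weak topology; once this inclusion is established, the usual Hahn-Banach separation theorem applied to the $\sigma$-weakly closed convex set $F$ and the point $x'\notin F$ in the dual pair $(M^{sa},M_*^{sa})$ produces a normal functional $\omega\in M_*^{sa}$ separating them, and hereditarity of $F$ forces $\omega\in M_*^+$ after normalization. So the real content is the inclusion. Suppose $x\ge 0$ lies in the $\sigma$-weak closure of $F-M^+$; pick a net $x_i=y_i-z_i$ with $y_i\in F$, $z_i\in M^+$, and $x_i\to x$ $\sigma$-weakly, so that $x_i\le y_i$ for all $i$.

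First I would address the unboundedness of the net $y_i$. Here one invokes Haagerup's trick: using the Krein-\v Smulian theorem, it suffices to prove the separation statement (equivalently the bipolar identity) after intersecting everything with a large ball, because a convex set whose intersection with every ball is $\sigma$-weakly closed is itself $\sigma$-weakly closed. Concretely, replacing $F$ by $F\cap\{x:\|x\|\le R\}$ and working inside a ball, one may assume the net $x_i$ is norm-bounded, say $\|x_i\|\le K$ for all $i$. Then for a fixed small $\delta>0$ the elements $f_\delta(y_i)$ are well-defined positive operators bounded in norm by $\delta^{-1}$, and since $f_\delta$ is operator monotone (Lemma, and its consequences) we get $f_\delta(y_i)\le y_i\in F$, hence $f_\delta(y_i)\in F$ by hereditarity. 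By $\sigma$-weak compactness of bounded sets, pass to a subnet along which $f_\delta(y_i)$ converges $\sigma$-weakly to some $w_\delta\in F$ (using that $F$ intersected with a ball is $\sigma$-weakly closed).

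Next I would relate $w_\delta$ to $x$. This is where the affine-approximation Lemma enters, since $x\mapsto f_\delta(x)$ is not $\sigma$-weakly continuous. From $x_i\le y_i$ and part~(2) of the Lemma (with the splitting $x_i=y_i-z_i\le y_i$, applied after a suitable shift to make things positive) together with part~(1), one obtains an inequality of the form $x_i\le f_\delta(y_i)+\varepsilon(\delta)$ where $\varepsilon(\delta)\to 0$ as $\delta\to 0$; crucially the bound $t\le f_\delta(t)+\delta^{1/2}$ on the relevant bounded spectral range lets one pass the inequality through the $\sigma$-weak limit in $i$ even though $f_\delta$ itself is not continuous, because $t\le f_\delta(t)+\delta^{1/2}$ is an \emph{affine} upper bound valid pointwise. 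Taking the $\sigma$-weak limit in $i$ gives $x\le w_\delta+\varepsilon(\delta)$, i.e. $x-\varepsilon(\delta)\le w_\delta\in F$, so $x-\varepsilon(\delta)\in F$ by hereditarity (after checking positivity of the relevant element, or rather using $(x-\varepsilon(\delta))^+\le w_\delta$). Finally, letting $\delta\to 0$, the net $x-\varepsilon(\delta)\to x$ in norm, hence $\sigma$-weakly, and $\sigma$-weak closedness of $F$ yields $x\in F$.

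The main obstacle I anticipate is the middle step: making the affine approximation bookkeeping precise so that the error term genuinely depends only on $\delta$ and not on $i$, and ensuring all spectral-range hypotheses of the Lemma (the bound $|t|\le\tfrac12\delta^{-1/4}$, positivity) are met uniformly in $i$ — this is exactly why the boundedness reduction via Krein-\v Smulian must be done first, and why the constants have to be tracked carefully. A secondary subtlety is that one must separate the two roles of $\delta$: it is fixed while taking the limit over $i$, and only afterwards sent to $0$, and one must verify that $w_\delta$ (or rather the family $x-\varepsilon(\delta)$) stays in $F$ at each stage before the final limit. Everything else — the Hahn-Banach separation at the end, the normalization to get $\omega\ge 0$ from hereditarity, the passage between the bipolar and separation formulations — is routine.
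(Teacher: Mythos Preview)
Your overall strategy matches the paper's, but there are two genuine gaps.

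The decisive one is the final step. From $x-\varepsilon(\delta)\le w_\delta$ with $w_\delta\ge0$ you claim $(x-\varepsilon(\delta))^+\le w_\delta$, but this implication fails for non-commuting operators: take $x=\bigl(\begin{smallmatrix}2&0\\0&0\end{smallmatrix}\bigr)$, $\varepsilon=1$, $w=\bigl(\begin{smallmatrix}2&\sqrt2\\\sqrt2&1\end{smallmatrix}\bigr)$; then $w\ge0$ and $x-1\le w$, yet $w-(x-1)^+=\bigl(\begin{smallmatrix}1&\sqrt2\\\sqrt2&1\end{smallmatrix}\bigr)$ has negative determinant. The paper avoids this by a second suppression: from $0\le x\le w_\delta+\delta^{1/2}$ it applies $f_{\delta'}$ (which \emph{is} operator monotone) to get $(1+\delta'\|x\|)^{-1}x\le f_{\delta'}(x)\le f_{\delta'}(w_\delta)+\delta^{1/2}$ via parts (2)--(3) of the Lemma. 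Now $f_{\delta'}(w_\delta)\in F$ is bounded by $(\delta')^{-1}$ uniformly in $\delta$, so a $\sigma$-weak limit point as $\delta\to0$ dominates $(1+\delta'\|x\|)^{-1}x$, placing the latter in $F$; finally $\delta'\to0$.

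The second gap is the Krein--\v Smulian reduction. ``Replacing $F$ by $F\cap M_R$'' is not a legitimate move: the bipolar identity for the truncated set is a different statement, and Krein--\v Smulian does not by itself let you assume an approximating net is bounded. The paper instead introduces the auxiliary convex set
\[
G:=\bigl\{x\in M^{sa}:\ \text{for all small }\delta>0\ \exists\,y_\delta\in F,\ \|y_\delta\|\le\delta^{-1},\ x\le y_\delta+\delta^{1/2}\bigr\},
\]
proves $F-M^+\subset G$ and $G^+\subset F$, and applies Krein--\v Smulian to $G$ by showing $G\cap M_r=\overline{(F-M^+)\cap M_{2r}}\cap M_r$ for each $r$. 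Your bounded-net computation is exactly the inclusion $\overline{(F-M^+)\cap M_{2r}}\subset G$; the missing half is that every $x\in G\cap M_r$ lies in $\overline{(F-M^+)\cap M_{2r}}$ (via $x-\delta^{1/2}\in F-M^+$), which is what makes $G\cap M_r$ closed and hence lets Krein--\v Smulian close the loop.
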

\begin{proof}
Since the positive polar is represented by the real polar
\[F^{r+}=F^r\cap M_*^+=F^r\cap(-M^+)^r=(F\cup-M^+)^r=(F-M^+)^r,\]
the positive bipolar can be written as $F^{r+r+}=(F-M^+)^{rr+}=(\overline{F-M^+})^+$ by the usual real bipolar theorem, where the closure is for the $\sigma$-weak topology.
Because $F=(F-M^+)^+\subset(\overline{F-M^+})^+$, it suffices to prove the opposite inclusion $(\overline{F-M^+})^+\subset F$.

Define
\[G:=\left\{x\in M^{sa}:\begin{tabular}{c}
for any sufficiently small $\delta>0$\\
there is $y_\delta\in F$ such that\\
$\|y_\delta\|\le\delta^{-1}$ and $x\le y_\delta+\delta^{\frac12}$\\
\end{tabular}\right\}.\]
We prove $(\overline{F-M^+})^+\subset F$ via three steps, $F-M^+\subset G$, $G^+\subset F$, and $\overline G\subset G$.

Let $x\in F-M^+$, with $y\in F$ such that $x\le y$.
If we define $y_\delta:=f_\delta(y)$, then for sufficiently small $\delta>0$ such that $\|x\|\le2^{-1}\delta^{-\frac14}$ and $\delta\le1$, we have $x\le f_\delta(x)+\delta^{\frac12}\le y_\delta+\delta^{\frac12}$, so $x\in G$.

Let $x\in G^+$.
Take $y_\delta\in F$ such that $\|y_\delta\|\le\delta^{-1}$ and $x\le y_\delta+\delta^{\frac12}$ for sufficiently small $\delta$.
For any $\delta'>0$, we have
\[0\le(1+\delta'\|x\|)^{-1}x\le f_{\delta'}(x)\le f_{\delta'}(y_\delta+\delta^{\frac12})\le f_{\delta'}(y_\delta)+\delta^{\frac12}.\]
Since $f_{\delta'}(y_\delta)\in F$ is bounded for fixed $\delta'$, by considering the limit along a cofinal ultrafilter on the set of $\delta$, we have $(1+\delta'\|x\|)^{-1}x\in F$, so the limit $\delta'\to0$ gives $x\in F$.

Now it suffices to show $G$ is $\sigma$-weakly closed.
We claim that for any $r>0$ we have
\[\overline{(F-M^+)\cap M_{2r}}\subset G,\qquad G\cap M_r\subset\overline{(F-M^+)\cap M_{2r}},\]
where $M_r:=\{x\in M:\|x\|\le r\}$.
It these are true, then
\[G\cap M_r=\overline{(F-M^+)\cap M_{2r}}\cap M_r\]
is $\sigma$-weakly closed and convex in $M$ for all $r>0$, so the Krein-\v Smulian theorem proves that $G$ is $\sigma$-weakly closed.

Let $x_i\in (F-M^+)\cap M_{2r}$ be a net such that $x_i\to x$ $\sigma$-weakly in $M$ and $\|x_i\|\le2r$ for all $i$.
Take $y_i\in F$ such that $x_i\le y_i$ for all $i$.
For sufficiently small $\delta>0$ such that $2r\le2^{-1}\delta^{-\frac14}$ and $\delta\le1$, we have $x_i\le f_\delta(x_i)+\delta^{\frac12}\le f_\delta(y_i)+\delta^{\frac12}$.
By considering a cofinal ultrafilter on the index set of $i$, if we define $y_\delta$ as the $\sigma$-weak ultralimit of $f_\delta(y_i)$ of this ultrafilter, then we have $\|y_\delta\|\le\delta^{-1}$ and $x\le y_\delta+\delta^{\frac12}$, so $x\in G$.

Let $x\in G\cap M_r$.
Take $y_\delta\in F$ such that $\|y_\delta\|\le\delta^{-1}$ and $x\le y_\delta+\delta^{\frac12}$ for sufficiently small $\delta$.
If $\delta^{\frac12}<r$, then $x-\delta^{\frac12}\in(F-M^+)\cap M_{2r}$ $\sigma$-weakly converges to $x$ as $\delta\to0$, so $x\in\overline{(F-M^+)\cap M_{2r}}$.
This completes the proof.
\end{proof}

\begin{theorem}
Let $M$ be a von Neumann algebra, and consider the dual pair $(M_*^{sa},M^{sa})$.
If $F_*$ is a norm closed convex hereditary subset of $M_*^+$, then $F_*=F_*^{r+r+}$.
Equivalently, if $\omega'\in M_*^+\setminus F_*$, then there is $x\in M^+$ such that $\omega'(x)>1$ and $\omega(x)\le1$ for $\omega\in F_*$.
\end{theorem}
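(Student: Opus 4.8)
The plan is to recast the positive bipolar identity as a one-sided inclusion and then transport the whole problem, via a commutant Radon--Nikodym derivative, into the commutant of a GNS representation, where the already-established von Neumann algebraic statement Theorem~\ref{1} can be applied. \emph{Reduction.} Exactly as in the proof of Theorem~\ref{1}, one has $F_*^{r+}=(F_*-M_*^+)^r$ and hence $F_*^{r+r+}=(\overline{F_*-M_*^+})^+$, where the closure is in the $\sigma(M_*,M)$-topology. Since $(M_*)^*=M$, this weak topology is precisely the weak topology of the Banach space $M_*$, so for the convex set $F_*-M_*^+$ its $\sigma(M_*,M)$-closure and its norm closure coincide; this is what makes the norm formulation of the hypothesis the natural one. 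As $F_*=(F_*-M_*^+)^+\subset(\overline{F_*-M_*^+})^+$ trivially, it remains to prove $(\overline{F_*-M_*^+})^+\subset F_*$ for the norm closure. Metrizability of the norm lets us fix a sequence $\omega_n\in F_*-M_*^+$ with $\|\omega_n-\omega\|\to0$ and $\omega\in M_*^+$, choose $\varphi_n\in F_*$ with $\omega_n\le\varphi_n$, and, after passing to a subsequence, assume $\|\omega_n-\omega\|\le2^{-n}$; the goal is $\omega\in F_*$.

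\emph{Transport to the commutant.} I would put $\psi:=\omega+\sum_n[\omega_n-\omega]\in M_*^+$ (the series converges in norm), so that $\omega(x)\le\psi(x)$ and $|\omega_n(x)|\le\psi(x)$ for all $x\in M^+$. Then $\omega$ and every $\omega_n$ lie in the image of the commutant Radon--Nikodym map $\theta\colon N\to M_*$ associated to $\psi$, where $N:=\pi(M)'$ and $(\pi,H,\Omega)$ is the GNS datum of $\psi$. The map $\theta$ is injective (as $\Omega$ separates $N$), continuous from the $\sigma$-weak topology of $N$ to the $\sigma(M_*,M)$-topology, and an order isomorphism onto its image (for $k\in N^{sa}$, $\theta(k)\ge0$ iff $\langle k\pi(y)\Omega,\pi(y)\Omega\rangle\ge0$ for all $y\in M$, i.e.\ iff $k\ge0$). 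Setting $h:=\theta^{-1}(\omega)$ and $h_n:=\theta^{-1}(\omega_n)$ we get $0\le h\le1$ and $-1\le h_n\le1$; and since the unit ball of $N$ is $\sigma$-weakly compact while $\theta$ carries every $\sigma$-weak cluster point of $(h_n)$ to the weak limit $\omega$ of $(\omega_n)$, that cluster point must be $h$, so $h_n\to h$ $\sigma$-weakly in $N$.

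\emph{Applying Theorem~\ref{1} in $N$.} Let $\mathcal F:=\{g\in N^+:\theta(g)\in F_*\}$. Since $\theta$ is a continuous order isomorphism onto its image and $F_*$ is norm closed, convex, and hereditary --- hence $\sigma(M_*,M)$-closed --- the set $\mathcal F$ is a $\sigma$-weakly closed convex hereditary subset of $N^+$, so Theorem~\ref{1} applied to the von Neumann algebra $N$ gives $\mathcal F=(\overline{\mathcal F-N^+})^+$ with $\sigma$-weak closure. Because $h\ge0$, it therefore suffices to show that each $h_n$ lies in $\mathcal F-N^+$: then $h=\lim h_n\in\overline{\mathcal F-N^+}$, whence $h\in\mathcal F$ and $\omega=\theta(h)\in F_*$. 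Note that, unlike in Theorem~\ref{1}, no Krein--\v Smulian argument is needed here, precisely because the suppressing reference functional $\psi$ already makes the net $(h_n)$ norm bounded.

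\emph{The main obstacle.} What remains, and is the heart of the matter, is to place $h_n$ in $\mathcal F-N^+$: to find $g_n\in N^+$ with $h_n\le g_n$ and $\theta(g_n)\in F_*$; equivalently, since $\theta$ is an order isomorphism onto its image, to produce a positive functional in $F_*\cap\operatorname{im}\theta$ that dominates $\omega_n$. From $\omega_n\le\varphi_n\in F_*$ and $\omega_n\le\psi$ this is immediate when $M$ is commutative (take $\varphi_n\wedge\psi$), but in general $M_*^{sa}$ is not a lattice and $\varphi_n$ itself need not belong to $\operatorname{im}\theta$. I expect this to be handled by suppressing $\varphi_n$ with the operator-monotone functions $f_\delta$ inside the GNS representation of $\psi+\varphi_n$ and invoking the hereditarity of $F_*$ to return to $F_*$; this suppression is the dual-space counterpart of the auxiliary set $G$ from the proof of Theorem~\ref{1}, and executing it carefully is the one genuinely delicate point of the argument.
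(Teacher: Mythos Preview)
Your reduction and the transport to the commutant $N=\pi(M)'$ are correct, and your set $\mathcal F$ is indeed $\sigma$-weakly closed, convex, and hereditary. The genuine gap is exactly the one you flag at the end: with your choice $\psi=\omega+\sum_n[\omega_n-\omega]$ there is no reason for $\varphi_n$ to lie in $\operatorname{im}\theta$, so you cannot place $h_n$ in $\mathcal F-N^+$, and without that Theorem~\ref{1} gives nothing. Your proposed workaround (suppressing $\varphi_n$ in the GNS of $\psi+\varphi_n$) would send you into a different commutant for each $n$, defeating the purpose of working in a single $N$. The fix is much simpler than you suggest: enlarge the reference functional to
\[
\psi:=\omega+\sum_n[\omega_n-\omega]+\sum_n 2^{-n}\frac{\varphi_n}{1+\|\varphi_n\|},
\]
so that $\varphi_n\le 2^n(1+\|\varphi_n\|)\psi$ forces $\varphi_n\in\operatorname{im}\theta$. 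Then $g_n:=\theta^{-1}(\varphi_n)\in\mathcal F$ dominates $h_n$, and your appeal to Theorem~\ref{1} in $N$ finishes the proof immediately.

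With that correction your argument is complete, but it takes a different route from the paper. The paper also uses the enlarged $\psi$ above, but then deliberately \emph{avoids} invoking Theorem~\ref{1} (and hence Krein--\v Smulian): it passes via Mazur's lemma to a net $\omega_i$ with $\theta^{-1}(\omega_i)\to\theta^{-1}(\omega)$ strongly, applies the functional calculus $f_\delta$ directly in $N$ (using strong continuity of $f_\delta$), and exploits the hereditarity of $F_*$ on the resulting suppressed functionals $\varphi_{i,\delta}$. Your route is essentially Haagerup's original one---the paper explicitly mentions that Haagerup used (1) to prove (2)---and it is shorter once Theorem~\ref{1} is in hand; the paper's route buys logical independence of (2) from the Krein--\v Smulian machinery, which is the point it wishes to make.
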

\begin{proof}
It is enough to prove $(\overline{F_*-M_*^+})^+\subset F_*$, where the closure is for the weak topology or equivalently for the norm by the convexity of $F_*-M_*^+$, so we begin our proof by fixing $\omega\in(\overline{F_*-M_*^+})^+$ and a sequence $\omega_n\in F_*-M_*^+$ such that $\omega_n\to\omega$ in norm of $M_*$.
Take $\varphi_n\in F_*$ such that $\omega_n\le\varphi_n$ for all $n$.
We may assume $\|\omega_n-\omega\|\le2^{-n}$ for all $n$ by passing to a subsequence.
Define
\[\psi:=\omega+\sum_n[\omega_n-\omega]+\sum_n2^{-n}\frac{\varphi_n}{1+\|\varphi_n\|}\]
in $M_*^+$, and let $\theta$ be the commutant Radon-Nikodym map associated to $\psi$.
Since $-\psi\le\omega_n\le\psi$ implies the boundedness $\|\theta^{-1}(\omega_n)\|\le1$ for all $n$, the weak convergence $\omega_n\to\omega$ in $M_*$ implies the convergence $\theta^{-1}(\omega_n)\to\theta^{-1}(\omega)$ in the weak operator topology of $\pi(M)'$.
By the Mazur lemma, we can take a net $\omega_i$ in the convex hull of $\omega_n$ such that $\theta^{-1}(\omega_i)\to\theta^{-1}(\omega)$ strongly in $\pi(M)'$, and the corresponding $\varphi_i\in F^*$ can be defined such that $\omega_i\le\varphi_i$ for all $i$.
In fact, the net $\omega_i$ can be taken to be a sequence because the commutant is $\sigma$-finite by the existence of the separating vector, but it is not necessary in here.
For each $i$ and $0<\delta<1$, define
\[\omega_\delta:=\theta(f_\delta(\theta^{-1}(\omega))),\qquad\omega_{i,\delta}:=\theta(f_\delta(\theta^{-1}(\omega_i))),\qquad\varphi_{i,\delta}:=\theta(f_\delta(\theta^{-1}(\varphi_i))),\]
where the functional calculus $f_\delta(\theta^{-1}(\omega_i))$ is well-defined because $-1\le\theta^{-1}(\omega_i)$ for all $i$.
Since $f_\delta(\theta^{-1}(\omega_i))\to f_\delta(\theta^{-1}(\omega))$ strongly in the commutant by the strong continuity of $f_\delta$, we have $\omega_{i,\delta}\to\omega_\delta$ weakly in $M_*$ for each $\delta$.
Taking a cofinal ultrafilter on the index set of $i$, define $\varphi_\delta$ by the image of the $\sigma$-weak ultralimit of $f_\delta(\theta^{-1}(\varphi_i))$ in the commutant under the Radon-Nikodym map $\theta$.
Then, $\varphi_{i,\delta}\to\varphi_\delta$ weakly in $M_*$ for each $\delta$.
Since $\varphi_i\in F^*$, the inequality $0\le\varphi_{i,\delta}\le\varphi_i$ implies $\varphi_{i,\delta}\in F_*$, and the weak convergence $\varphi_{i,\delta}\to\varphi_\delta$ in $M_*$ implies $\varphi_\delta\in F_*$.
Furthermore, $\omega_i\le\varphi_i$ implies $\omega_{i,\delta}\le\varphi_{i,\delta}$ and $0\le\omega_\delta\le\varphi_\delta$, so $\omega_\delta\in F_*$.
The weak convergence $\omega_\delta\to\omega$ in $M_*$ as $\delta\to0$ finally implies $\omega\in F_*$, so we get $(\overline{F_*-M_*^+})^+\subset F_*$.
\end{proof}

\begin{theorem}
Let $A$ be a C$^*$-algebra, and consider the dual pair $(A^{sa},A^{*sa})$.
If $F$ is a norm closed convex hereditary subset of $A^+$, then $F=F^{r+r+}$.
Equivalently, if $a'\in A^+\setminus F$, then there is $\omega\in A^{*+}$ such that $\omega(a')>1$ and $\omega(a)\le1$ for $a\in F$.
\end{theorem}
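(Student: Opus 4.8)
The plan is to deduce this theorem from Theorem~\ref{1} by passing to the enveloping von Neumann algebra. As in the proofs of the two preceding theorems, I would first rewrite the positive bipolar statement as an inclusion: since $F^{r+}=(F\cup-A^+)^r=(F-A^+)^r$, the ordinary real bipolar theorem gives $F^{r+r+}=(\overline{F-A^+})^+$, where the closure is for the weak topology $\sigma(A^{sa},A^{*sa})$, equivalently for the norm by convexity of $F-A^+$. Since $F=(F-A^+)^+$ is always contained in this set, it suffices to prove $(\overline{F-A^+})^+\subset F$; equivalently, for $a'\in A^+\setminus F$ one must produce $\omega\in A^{*+}$ with $\omega(a')>1$ and $\omega(a)\le1$ for all $a\in F$.

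Next I would embed $A$ as a $\sigma$-weakly dense C$^*$-subalgebra of its enveloping von Neumann algebra $M:=A^{**}$, so that $M_*=A^*$, $A^+\subset M^+$, and $A^{*+}=M_*^+$. Let $\hat F:=(\overline{F-M^+})^+$ with the $\sigma$-weak closure, the smallest $\sigma$-weakly closed convex hereditary subset of $M^+$ containing $F$. Then Theorem~\ref{1} applied to $M$ and $\hat F$ gives $\hat F=\hat F^{r+r+}$, so every $x'\in M^+\setminus\hat F$ is strictly separated from $\hat F$ by some $\omega\in M_*^+=A^{*+}$. Since $F\subset\hat F$, this finishes the proof as soon as we know the identity $\hat F\cap A^+=F$: for then $A^+\setminus F=A^+\setminus\hat F$, and for $a'\in A^+\setminus F$ the functional $\omega$ obtained from Theorem~\ref{1} satisfies $\omega(a')>1$ and $\omega\le1$ on $\hat F\supset F$, as required.

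So everything reduces to the identity $\hat F\cap A^+=F$. The inclusion $F\subset\hat F\cap A^+$ is trivial, and $\hat F\cap A^+$ is automatically a norm closed convex hereditary subset of $A^+$, so the whole content is $\hat F\cap A^+\subset F$. I would prove this by rerunning the three-step scheme of Theorem~\ref{1} inside $M=A^{**}$, with the auxiliary set
\[G:=\left\{x\in M^{sa}:\ \text{for all small }\delta>0\text{ there is }y_\delta\in F\text{ with }\|y_\delta\|\le\delta^{-1}\text{ and }x\le y_\delta+\delta^{\frac12}\right\},\]
where $y_\delta$ is required to lie in $F$ itself. As in Theorem~\ref{1} one checks $F-M^+\subset G$ (using $x\le f_\delta(x)+\delta^{\frac12}$ for sufficiently small $\delta$, the operator monotonicity of $f_\delta$, and $f_\delta(y)\in F$ for $y\in F$, which holds because $0\le f_\delta(y)\le y$ and $f_\delta(0)=0$), that $G$ is $\sigma$-weakly closed by the Krein-\v Smulian argument, whence $\hat F\subset G$, and finally that $G\cap A^+\subset F$: for $x\in G\cap A^+$ and arbitrary $\delta'>0$ one obtains $(1+\delta'\|x\|)^{-1}x\le f_{\delta'}(y_\delta)+\delta^{\frac12}$ with $f_{\delta'}(y_\delta)\in F$ of norm at most $(\delta')^{-1}$, and one wants to pass to the limit $\delta\to0$ and then $\delta'\to0$ to recover $x\in F$.

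I expect the main obstacle to be exactly this last recovery of $x\in F$. In Theorem~\ref{1} one simply takes a $\sigma$-weak ultralimit of the bounded net $f_{\delta'}(y_\delta)\in F$ and remains inside the $\sigma$-weakly closed set $F$, but here $F$ is only norm closed in $A$, so such an ultralimit escapes into $M\setminus A$ and merely returns the information $x\in\hat F\cap A^+$ that we already have. Closing this gap — replacing the $\sigma$-weak ultralimit by a genuine norm limit inside $F$, presumably by averaging over convex combinations and exploiting the interplay between the functional calculus $f_\delta$, the hereditarity of $F$, and the norm closedness of $F$ — is the real work, and it amounts to the C$^*$-algebraic Combes theorem phrased for norm closed convex hereditary sets. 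It is the operator-side counterpart of the single reference functional and commutant Radon-Nikodym derivative used in the proof of the preceding theorem.
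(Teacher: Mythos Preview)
Your proposal has the gap you yourself flag: you cannot recover $x\in F$ from $(1+\delta'\|x\|)^{-1}x\le f_{\delta'}(y_\delta)+\delta^{\frac12}$ by $\sigma$-weak compactness, because the limit of $f_{\delta'}(y_\delta)$ lands in $A^{**}$ rather than in $F$. So the argument for $\hat F\cap A^+\subset F$ is incomplete, and you correctly note that closing it would amount to reproving the Combes theorem from scratch.

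The paper avoids this entirely by a different, much lighter choice of the auxiliary set in $A^{**}$. Instead of your $\hat F=(\overline{F-M^+})^+$, it takes the plain $\sigma$-weak closure $F^{**}:=\overline{F}^{\,\sigma\text{-weak}}$ of $F$ in $A^{**}$. The only work is to show that $F^{**}$ is already hereditary in $A^{**+}$: given $0\le x\le y$ in $A^{**}$ with $y\in F^{**}$, write $x^{1/2}=zy^{1/2}$ with $\|z\|\le1$, use Kaplansky density to pick bounded nets $b_i\in F$ and $c_i\in A$ with $\|c_i\|\le1$ converging $\sigma$-strongly$^*$ to $y$ and $z$, and observe that $b_i^{1/2}c_i^*c_ib_i^{1/2}\in A^+$ satisfies $0\le b_i^{1/2}c_i^*c_ib_i^{1/2}\le b_i\in F$, hence lies in $F$ by hereditarity, and converges $\sigma$-strongly$^*$ to $x$. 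Thus $x\in F^{**}$. With hereditarity in hand, Theorem~\ref{1} applies directly to $F^{**}$, and the identity $F^{**}\cap A=F$ is now trivial: a $\sigma$-weak limit in $A^{**}$ of elements of $F\subset A$ that happens to lie in $A$ is a weak limit in $A$, and $F$ is weakly closed in $A$ since it is norm closed and convex. No second pass through the $G$-machinery is needed.

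In short, the missing idea is to prove hereditarity of the $\sigma$-weak closure of $F$ \emph{before} invoking Theorem~\ref{1}, rather than to take the hereditary hull and then try to cut it back down to $A$.
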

\begin{proof}
We directly prove the separation without invoking the arguments of positive bipolars.
Denote by $F^{**}$ the $\sigma$-weak closure of $F$ in the universal von Neumann algebra $A^{**}$.
We first show that $F^{**}$ is hereditary subset of $A^{**+}$.
Suppose $0\le x\le y$ in $A^{**}$ and $y\in F^{**}$.
Then, there is $z\in A^{**}$ such that $x^{\frac12}=zy^{\frac12}$.
Take bounded nets $b_i$ in $F$ and $c_i$ in $A$ such that $b_i\to y$ and $c_i\to z$ $\sigma$-strongly$^*$ in $A^{**}$ using the Kaplansky density theorem.
We may assume the indices of these two nets are shared by considering the product directed set.
Since both the multiplication and the involution of a von Neumann algebra on bounded parts are continuous in the $\sigma$-strong$^*$ topology, and since the square root on a positive bounded interval is strongly continuous, we have the $\sigma$-strong$^*$ limit
\[x=y^{\frac12}z^*zy^{\frac12}=\lim_ib_i^{\frac12}c_i^*c_ib_i^{\frac12},\]
so we obtain $x\in F^{**}$ from $b_i^{\frac12}c_i^*c_ib_i^{\frac12}\in F$.
Thus, $F^{**}$ is hereditary in $A^{**+}$.

Let $a'\in A^+\setminus F$.
If $a'\in F^{**}$, then we have a net $a_i$ in $F$ such that $a_i\to a'$ $\sigma$-weakly in $A^{**}$, which means that $a_i\to a'$ weakly in $A$, and by the weak closedness of $F$ in $A$ we get a contradiction $a'\in F^{**}\cap A=F$.
It implies $a'\in A^{**+}\setminus F^{**}$, so by Theorem \ref{1}, there is $\omega\in A^{*+}$ such that $\omega(a')>1$ and $\omega(a)\le1$ for all $a\in F\subset F^{**}$, and we are done.
\end{proof}

\begin{theorem}
Let $A$ be a C$^*$-algebra, and consider the dual pair $(A^{*sa},A^{sa})$.
If $F^*$ is a weakly$^*$ closed convex hereditary subset of $A^{*+}$, then $F^*=(F^*)^{r+r+}$.
Equivalently, if $\omega'\in A^{*+}\setminus F^*$, then there is $a\in A^+$ such that $\omega'(a)>1$ and $\omega(a)\le1$ for $\omega\in F^*$.
\end{theorem}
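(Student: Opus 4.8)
The plan is to run the three-step argument from the proof of Theorem~\ref{1}, but to linearize everything through the commutant Radon-Nikodym maps of Section~2.2, as in the proof for preduals, with the one new feature — foreshadowed in Section~2.2 — that the reference positive functional must be allowed to depend on the approximating net. As usual, writing $(F^*)^{r+}=(F^*-A^{*+})^r$ and using the real bipolar theorem for the dual pair $(A^{*sa},A^{sa})$, the identity $F^*=(F^*)^{r+r+}$ is equivalent to $(\overline{F^*-A^{*+}})^+\subset F^*$, the closure being for the weak$^*$ topology $\sigma(A^*,A)$. Working inside the enveloping von Neumann algebra $A^{**}$, so that $A^*=(A^{**})_*$ and the commutant Radon-Nikodym map $\theta$ of any $\psi\in A^{*+}$ is available, I would introduce
\[G^*:=\left\{\omega\in A^{*sa}:\begin{tabular}{c}
there is $C>0$ such that for every sufficiently small $\delta>0$\\
there exist $\varphi_\delta\in F^*$ and $\psi_\delta\in A^{*+}$ with $\|\psi_\delta\|\le C$,\\
$-\psi_\delta\le\omega\le\psi_\delta$, $\varphi_\delta\le\delta^{-1}\psi_\delta$, and $\omega\le\varphi_\delta+\delta^{\frac12}\psi_\delta$\\
\end{tabular}\right\},\]
a convex subset of $A^{*sa}$; the key is that $\psi_\delta$ is genuinely allowed to depend on $\delta$, only its norm being kept uniformly bounded. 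I would then prove $(\overline{F^*-A^{*+}})^+\subset F^*$ through the inclusions $F^*-A^{*+}\subset G^*$, $(G^*)^+\subset F^*$, and $\overline{G^*}\subset G^*$, exactly as in Theorem~\ref{1}.

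For $F^*-A^{*+}\subset G^*$, given $\omega=\varphi-p$ with $\varphi\in F^*$ and $p\in A^{*+}$, I would take $\psi_\delta:=\varphi/\|\varphi\|+[\omega]$ for all $\delta$ (so $C=1+\|\omega\|$), let $\theta$ be its Radon-Nikodym map, and set $\varphi_\delta:=\theta(f_\delta(\theta^{-1}(\varphi)))$; since $0\le f_\delta(\theta^{-1}(\varphi))\le\min(\theta^{-1}(\varphi),\delta^{-1})$, this lies in $F^*$ by hereditarity and satisfies $\varphi_\delta\le\delta^{-1}\psi_\delta$, while $\omega\le\varphi_\delta+\delta^{\frac12}\psi_\delta$ comes from $\|\theta^{-1}(\omega)\|\le1$ via Lemma~2.2(1) and the operator monotonicity of $f_\delta$ applied to $\theta^{-1}(\omega)\le\theta^{-1}(\varphi)$. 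For $(G^*)^+\subset F^*$, fixing $\omega\ge0$ in $G^*$ with constant $C$ and data $\varphi_\delta,\psi_\delta$, let $\theta_\delta$ be the Radon-Nikodym map of $\psi_\delta$ and put $h_\delta:=\theta_\delta^{-1}(\omega)\in[0,1]$, $g_\delta:=\theta_\delta^{-1}(\varphi_\delta)\ge0$, so that $h_\delta\le g_\delta+\delta^{\frac12}$; then for any $\delta'>0$, Lemma~2.2(2),(3) and operator monotonicity give $(1+\delta')^{-1}h_\delta\le f_{\delta'}(h_\delta)\le f_{\delta'}(g_\delta)+\delta^{\frac12}$, hence after applying $\theta_\delta$,
\[(1+\delta')^{-1}\omega\le\theta_\delta(f_{\delta'}(g_\delta))+\delta^{\frac12}\psi_\delta.\]
Here $\theta_\delta(f_{\delta'}(g_\delta))\in F^*$ — it lies between $0$ and $\varphi_\delta$ — and, crucially, $\|\theta_\delta(f_{\delta'}(g_\delta))\|\le\|\psi_\delta\|(\delta')^{-1}\le C(\delta')^{-1}$ is independent of $\delta$, whereas $\|\delta^{\frac12}\psi_\delta\|\le C\delta^{\frac12}\to0$. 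A weak$^*$ cluster point of the bounded net $(\theta_\delta(f_{\delta'}(g_\delta)))_\delta$ in the weak$^*$ closed set $F^*$ is then some $\chi_{\delta'}\in F^*$ with $(1+\delta')^{-1}\omega\le\chi_{\delta'}$, so $(1+\delta')^{-1}\omega\in F^*$ by hereditarity, and $\delta'\to0$ with the norm-closedness of $F^*$ gives $\omega\in F^*$. This is the re-bounding trick of Theorem~\ref{1} transported to the Radon-Nikodym side; it is carried out with respect to $\psi_\delta$ rather than $\varphi_\delta+\psi_\delta$ precisely so that $\|\theta_\delta\|=\|\psi_\delta\|\le C$.

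It would then remain to prove $\overline{G^*}\subset G^*$, and here the Krein-\v Smulian theorem and the net-dependence of the references do the work. Since the unit ball of $A^*$ is weak$^*$ compact, it suffices that $G^*\cap\{\|\cdot\|\le r\}$ be weak$^*$ closed for each $r>0$, which I would deduce from $G^*\cap\{\|\cdot\|\le r\}=\overline{(F^*-A^{*+})\cap\{\|\cdot\|\le2r\}}\cap\{\|\cdot\|\le r\}$. The inclusion $\supseteq$ is immediate: if $\omega\in G^*\cap\{\|\cdot\|\le r\}$ then for $\delta$ small enough $\omega-\delta^{\frac12}\psi_\delta\le\varphi_\delta$ lies in $(F^*-A^{*+})\cap\{\|\cdot\|\le2r\}$ and converges to $\omega$ in norm. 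For $\subseteq$, given $\omega_i\le\varphi_i$ with $\varphi_i\in F^*$, $\|\omega_i\|\le2r$, and $\omega_i\to\omega$ weak$^*$, I would pick the $i$-dependent references $\psi_i:=\varphi_i/\|\varphi_i\|+[\omega_i]$ (norms $\le1+2r$), form $h_i=\theta_i^{-1}(\omega_i)$ and $g_i=\theta_i^{-1}(\varphi_i)$ and $\varphi_{i,\delta}:=\theta_i(f_\delta(g_i))\in F^*$, which satisfy $\omega_i\le\varphi_{i,\delta}+\delta^{\frac12}\psi_i$ and $\varphi_{i,\delta}\le\delta^{-1}\psi_i$, and pass to weak$^*$ ultralimits along a cofinal ultrafilter: the uniform bound on $\|\psi_i\|$ and, for each fixed $\delta$, on $\|\varphi_{i,\delta}\|\le\delta^{-1}(1+2r)$ make these ultralimits exist in $A^*$, producing $\psi_\delta,\varphi_\delta$ that witness $\omega\in G^*$ with $C=1+2r$. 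Combining the three inclusions with $\overline{G^*}=G^*$ yields $(\overline{F^*-A^{*+}})^+\subset(G^*)^+\subset F^*$.

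The step I expect to demand the most care is this last one. The references $\psi_i$ are forced to depend on $i$, so their Gelfand-Naimark-Segal spaces differ, and one must check that the weak$^*$ ultralimits of $\psi_i$, of $\varphi_{i,\delta}=\theta_i(f_\delta(g_i))$, and — through the sandwich $\theta_i(f_\delta(h_i))\le\varphi_{i,\delta}\le\varphi_i$ together with $\omega_i\le\theta_i(f_\delta(h_i))+\delta^{\frac12}\psi_i$ — of the suppressed derivatives all exist and remain mutually compatible, and that the one uniform estimate $\|\psi_i\|\le1+2r$ simultaneously makes these limits available and forces the error $\delta^{\frac12}\psi_\delta$ to vanish as $\delta\to0$. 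This is precisely the mechanism that, in the absence of a single reference functional controlling the whole net, replaces the one-Gelfand-Naimark-Segal-space argument used in the predual theorem.
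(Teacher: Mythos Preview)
Your argument is correct and follows essentially the same three-step scheme as the paper: reduce to $(\overline{F^*-A^{*+}})^+\subset F^*$, introduce an auxiliary set $G^*$ via the commutant Radon--Nikodym map and the suppressions $f_\delta$, and verify $F^*-A^{*+}\subset G^*$, $(G^*)^+\subset F^*$, and the weak$^*$ closedness of $G^*$ by Krein--\v Smulian with index-dependent references $\psi_i$. The only substantive difference is cosmetic: the paper's $G^*$ records a single $\psi$ with $\|\psi\|\le1$ and $\|\varphi_\delta\|\le\delta^{-1}$, constructing the dominated reference $\omega+\delta\varphi_\delta+\psi$ only inside the proof of $(G^*)^+\subset F^*$, whereas you build the domination conditions $-\psi_\delta\le\omega\le\psi_\delta$ and $\varphi_\delta\le\delta^{-1}\psi_\delta$ directly into $G^*$, which streamlines that step at no real cost (and in fact your $\psi_\delta$ never actually varies with $\delta$ in the proof). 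Two small slips to clean up: your labels ``$\supseteq$'' and ``$\subseteq$'' in the Krein--\v Smulian paragraph are interchanged relative to the equality you wrote, and the choice $\psi_\delta=\varphi/\|\varphi\|+[\omega]$ needs the trivial adjustment when $\varphi=0$.
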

\begin{proof}
As above, we will prove $(\overline{F^*-A^{*+}})^+\subset F^*$.
Define
\[G^*:=\left\{\omega\in A^{*sa}:\begin{tabular}{c}
there is $\psi\in A^{*+}$, and there is $\varphi_\delta\in F^*$ \\
for any sufficiently small $\delta>0$,
such that\\
$\|\psi\|\le1$, $\|\varphi_\delta\|\le\delta^{-1}$, and $\omega\le\varphi_\delta+\delta^{\frac12}\psi$
\end{tabular}\right\}.\]
It suffices to show $F^*-A^{*+}\subset G^*$, $G^{*+}\subset F^*$, and $\overline{G^*}\subset G^*$.

Let $\omega\in F^*-A^{*+}$.
Take $\varphi\in F^*$ such that $\omega\le\varphi$.
Define, for $\delta>0$,
\[\psi:=\frac{[\omega]}{1+\|\omega\|}+\frac\varphi{(1+\|\omega\|)(1+\|\varphi\|)},\qquad\varphi_\delta:=\theta(f_\delta(\theta^{-1}(\varphi))),\]
where $\theta$ is the commutant Radon-Nikodym map associated to $\psi$.
The norm conditions $\|\psi\|\le1$ and $\|\varphi_\delta\|\le\delta^{-1}$ are easily checked.
For sufficiently small $\delta>0$ such that $\|\theta^{-1}(\omega)\|\le1+\|\omega\|\le2^{-1}\delta^{-\frac14}$ and $\delta\le1$, we have
\[\theta^{-1}(\omega)\le f_\delta(\theta^{-1}(\omega))+\delta^{\frac12}\le f_\delta(\theta^{-1}(\varphi))+\delta^{\frac12},\]
so $\omega\le\varphi_\delta+\delta^{\frac12}\psi$ and $\omega\in G^*$.

Let $\omega\in G^{*+}$.
Take $\psi\in A^{*+}$ and $\varphi_\delta\in F^*$ such that $\|\psi\|\le1$, $\|\varphi_\delta\|\le\delta^{-1}$, and $\omega\le\varphi_\delta+\delta^{\frac12}\psi$, for any sufficiently small $\delta>0$.
Let $\psi_\delta:=\omega+\delta\varphi+\psi$, and let $\theta_\delta$ be the associated commutant Radon-Nikodym map.
For any fixed $\delta'>0$, since $0\le\theta_\delta^{-1}(\omega)\le1$, we have
\begin{align*}
0&\le(1+\delta')^{-1}\theta_\delta^{-1}(\omega)\le f_{\delta'}(\theta_\delta^{-1}(\omega))\le f_{\delta'}(\theta_\delta^{-1}(\varphi_\delta+\delta^{\frac12}\psi))\\
&\le f_{\delta'}(\theta_\delta^{-1}(\varphi_\delta)+\delta^{\frac12})\le f_{\delta'}(\theta_\delta^{-1}(\varphi_\delta))+\delta^{\frac12},
\end{align*}
and it implies $0\le(1+\delta')^{-1}\omega\le\theta_\delta(f_{\delta'}(\theta_\delta^{-1}(\varphi_\delta)))+\delta^{\frac12}\psi_\delta$.
Since $\|\psi_\delta\|\le\|\omega\|+2$ is bounded and $\theta_\delta(f_{\delta'}(\theta_\delta^{-1}(\varphi_\delta)))\in F^*$ is also bounded for fixed $\delta'$, by considering the limit along a cofinal ultrafilter on the set of $\delta$, we have $(1+\delta')^{-1}\omega\in F^*$, so $\delta'\to0$ gives $\omega\in F^*$.

To show $G^*$ is weakly$^*$ closed, we claim for any $r>0$ that 
\[\overline{(F^*-A^{*+})\cap A_{2r}^*}\subset G^*,\qquad G^*\cap A_r^*\subset\overline{(F^*-A^{*+})\cap A_{2r}^*},\]
where $A_r^*:=\{\omega\in A^*:\|\omega\|\le r\}$.
If these are true, then
\[G^*\cap A_r^*=\overline{(F^*-A^{*+})\cap A_{2r}^*}\cap A_r^*\]
is weakly$^*$ closed and convex in $A^*$ for all $r>0$, so the Krein-\v Smulian theorem shows that $G^*$ is weakly$^*$ closed.

Let $\omega_i\in(F^*-A^{*+})\cap A_{2r}^*$ be a net such that $\omega_i\to\omega$ weakly$^*$ in $A^*$.
Following the proof of $F^*-A^{*+}\subset G^*$, we can take in the same way $\psi_i\in A^{*+}$ and $\varphi_{i,\delta}\in F^*$ such that $\|\psi_i\|\le1$, $\|\varphi_{i,\delta}\|\le\delta^{-1}$, and $\omega_i\le\varphi_{i,\delta}+\delta^{\frac12}\psi_i$, for uniformly sufficiently small $\delta$ such that $1+2r\le2^{-1}\delta^{-\frac14}$ because $\|\omega_i\|$ is bounded by $2r$.
Since the three conditions are preserved by the weak$^*$ convergence, taking the limit along a cofinal ultrafilter on the index set of $i$, we can obtain limit points $\psi$ and $\varphi_\delta$ so that $\omega\in G^*$.

Let $\omega\in G^*\cap A_r^*$.
Take $\psi\in A^{*+}$ and $\varphi_\delta\in F^*$ with $\|\psi\|\le1$, $\|\varphi_\delta\|\le\delta^{-1}$, and $\omega\le\varphi_\delta+\delta^{\frac12}\psi$, for any sufficiently small $\delta>0$.
If $\delta^{\frac12}<r$, then $\omega-\delta^{\frac12}\psi\in(F^*-A^{*+})\cap A_{2r}^*$ converges to $\omega$ weakly$^*$ in $A^*$ as $\delta\to0$, we have $\omega\in\overline{(F^*-A^{*+})\cap A_{2r}^*}$.
This completes the proof.
\end{proof}

Haagerup left in his paper \cite{MR380438} a comment without a proof that Theorem 3.4 is not difficult when a C$^*$-algebra is commutative.
As a final remark, we give a direct proof of this.

\begin{corollary}
Let $A$ be a commutative C$^*$-algebra, and consider the dual pair $(A^{*sa},A^{sa})$.
If $F^*$ is a weakly$^*$ closed convex hereditary subset of $A^{*+}$, then $F^*=(F^*)^{r+r+}$.
\end{corollary}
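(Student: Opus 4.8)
The plan is to keep the three-step scheme of the proof of Theorem 3.4 --- reduce, as there, to the inclusion $(\overline{F^*-A^{*+}})^{+}\subseteq F^*$ (the closure being weakly$^*$), produce an auxiliary weakly$^*$ closed convex set $G^*$ with $F^*-A^{*+}\subseteq G^*$ and $G^{*+}\subseteq F^*$, and then conclude via $\overline{F^*-A^{*+}}\subseteq G^*$, hence $(\overline{F^*-A^{*+}})^{+}\subseteq G^{*+}\subseteq F^*$. The point is that the one structural feature of the commutative case trivialises the suppression step: for a commutative C$^*$-algebra $A$ the ordered space $A^{*sa}$ is a vector lattice ($A^*=M(X)$ when $A=C_0(X)$), so the Jordan positive part $\omega\mapsto\omega_+$ coincides with the lattice operation $\omega\vee0$. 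In particular $\omega\mapsto\omega_+$ is positively homogeneous, subadditive, and order preserving, and one still has $\|\omega_+\|\le\|\omega\|$ from the Jordan decomposition. These are precisely the properties that in the noncommutative case had to be simulated by the functions $f_\delta$ and the commutant Radon--Nikodym map; here they come for free.

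Accordingly I would take $G^*:=\{\omega\in A^{*sa}:\omega_+\in F^*\}$ (we may assume $F^*\ne\emptyset$, so $0\in F^*$). If $\omega=\varphi-\sigma$ with $\varphi\in F^*$ and $\sigma\in A^{*+}$, then $\omega\le\varphi$ and $0\le\varphi$, hence $\omega_+=\omega\vee0\le\varphi\vee0=\varphi\in F^*$, and hereditarity of $F^*$ gives $\omega_+\in F^*$; thus $F^*-A^{*+}\subseteq G^*$. If moreover $\omega\ge0$, then $\omega_+=\omega$, so $G^{*+}\subseteq F^*$. Convexity of $G^*$ follows from the subadditivity and positive homogeneity of $\omega\mapsto\omega_+$ together with the convexity and hereditarity of $F^*$: for $\omega,\omega'\in G^*$ and $0\le t\le1$ one has $0\le(t\omega+(1-t)\omega')_+\le t\omega_++(1-t)\omega'_+\in F^*$, so $t\omega+(1-t)\omega'\in G^*$.

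It remains to see that $G^*$ is weakly$^*$ closed, and for this I would invoke the Krein-\v Smulian theorem exactly as in the proof of Theorem 3.4, so that it suffices to prove $G^*\cap A_r^*$ is weakly$^*$ closed for every $r>0$, where $A_r^*:=\{\omega\in A^*:\|\omega\|\le r\}$. The crucial point is that $\omega\in G^*\cap A_r^*$ forces $\|\omega_+\|\le\|\omega\|\le r$. Hence, given a net $\omega_i\in G^*\cap A_r^*$ with $\omega_i\to\omega$ weakly$^*$, the net $\omega_{i+}$ lies in the weakly$^*$ compact set $F^*\cap A_r^*$, so after passing to a subnet $\omega_{i+}\to\eta$ for some $\eta\in F^*\cap A_r^*$; then $\omega_{i-}=\omega_{i+}-\omega_i\to\eta-\omega$, and since each $\omega_{i-}\ge0$ and $A^{*+}$ is weakly$^*$ closed we get $\omega\le\eta$, whence $\omega_+\le\eta_+=\eta\in F^*$, i.e. $\omega\in G^*$ (and $\|\omega\|\le r$). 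I do not expect a genuine obstacle here --- this is the case Haagerup called not difficult --- the only thing to be careful with is that the order-theoretic identities $\omega\vee0\le\psi\vee0$ for $\omega\le\psi$ and $(t\omega+(1-t)\omega')\vee0\le t(\omega\vee0)+(1-t)(\omega'\vee0)$ genuinely use that $A^{*sa}$ is a lattice, which is exactly what fails for noncommutative $A$ and forces the more elaborate machinery in Theorem 3.4.
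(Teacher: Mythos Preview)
Your proof is correct and in fact somewhat cleaner than the paper's own argument for this corollary. Both hinge on the same structural fact --- that in the commutative case the Jordan positive part $\omega\mapsto\omega_+$ is order preserving (equivalently, $A^{*sa}$ is a vector lattice) --- but they organise the argument differently. The paper derives the monotonicity of $\omega\mapsto\omega_+$ via the commutant Radon--Nikodym map and the fact that $\pi(A)''$ is maximal abelian in the GNS representation, whereas you simply cite $A^*\cong M(X)$. More substantively, the paper takes $G^*$ to be the set of weak$^*$ limits of \emph{bounded} nets from $F^*-A^{*+}$, and then proves closedness via Krein--\v Smulian by a double-net argument (nets $\omega_{ij}$ inside nets $\omega_i$, taking positive parts twice); your choice $G^*=\{\omega\in A^{*sa}:\omega_+\in F^*\}$ makes each of the three steps essentially one line, with only a single passage to positive parts in the Krein--\v Smulian step. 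Your route is more economical; the paper's route has the virtue of mirroring Theorem~3.4 closely and of making explicit, through the Radon--Nikodym machinery, why commutativity is exactly what lets the ramp function $t\mapsto t_+$ behave like an operator monotone function.
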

\begin{proof}
We first see that $\omega_+\le\omega'_+$ if $\omega\le\omega'$ in $A^{*sa}$.
Consider the ramp functions $t\mapsto t_+:=\max\{0,t\}$ and $t\mapsto t_-:=\max\{0,-t\}$ defined on $t\in\mathbb{R}$.
Let $\omega\in A^{*sa}$ and $\psi\in A^{*+}$ such that $\omega$ belongs to the image of the commutant Radon-Nikodym map $\theta$ associated to $\psi$.
Let $\pi:A\to B(H)$ be the Gelfand-Naimark-Segal representation associated to $\psi$ with the canonical cyclic vector $\Omega\in H$.
Let $p\in\pi(A)'$ be a projection such that $|\theta^{-1}(\omega)|=\theta^{-1}(\omega)p$.
Then, $p\in\pi(A)''$ because $\pi(A)''$ is a maximal commutative subalgebra of $B(H)$ by Theorem III.1.2 in \cite{MR1873025}, or in a straightforward way because we can identify $p\in\pi(A)'$ with a bounded linear functional $\pi(A)''_*\to\mathbb{C}:(x\mapsto\langle x\pi(a)\Omega,\pi(b)\Omega\rangle)\mapsto\langle p\pi(a)\Omega,\pi(b)\Omega\rangle$, where $x\in\pi(A)''$ and $a,b\in A$.
If we take $a\in A$ such that $\|\pi(a)\|\le1$ and $|\langle\theta^{-1}(\omega)(p-\pi(a))\Omega,\Omega\rangle|<\varepsilon$ for arbitrarily chosen $\varepsilon>0$ using the Kaplansky density, then the limit $\varepsilon\to0$ on
\begin{align*}
\|\theta(\theta^{-1}(\omega)_+)\|+\|\theta(\theta^{-1}(\omega)_-)\|
&=\langle|\theta^{-1}(\omega)|\Omega,\Omega\rangle
=\langle\theta^{-1}(\omega)p\Omega,\Omega\rangle\\
&\le|\langle\theta^{-1}(\omega)\pi(a)\Omega,\Omega\rangle|+\varepsilon\le\|\omega\|+\varepsilon
\end{align*}
shows that the positive part $\omega_+$ in the Jordan decomposition of $\omega$ is given by the functional calculus $\omega_+=\theta(\theta^{-1}(\omega)_+)$ by the ramp function.
Moreover, every monotone function can behave like an operator monotone function on a commutative C$^*$-algebra, so we can deduce $\omega_+\le\omega'_+$ if $\omega\le\omega'$ in $A^{*sa}$.

Now we prove the theorem.
Define
\[G^*:=\left\{\omega\in\overline{F^*-A^{*+}}:\begin{tabular}{c}there is a bounded net $\omega_i\in F^*-A^{*+}$\\such that $\omega_i\to\omega$ weakly$^*$ in $A^*$\end{tabular}\right\}.\]
It suffices to show $F^*-A^{*+}\subset G^*$, $G^{*+}\subset F^*$, and $\overline{G^*}\subset G^*$.
We can easily check $F^*-A^{*+}\subset G^*$ by considering constant sequences.

If $\omega\in G^{*+}$, then there exist a bounded net $\omega_i\in F^*-A^{*+}$ and a net $\varphi_i\in F^*$ such that $\omega_i\le\varphi_i$ for all $i$ and $\omega_i\to\omega$ weakly$^*$ in $A^*$ by definition of $G^*$.
Since $0\le\omega_{i+}\le\varphi_i\in F^*$ implies $\omega_{i+}\in F^*$, and since the net $\omega_{i+}$ is bounded so that we may assume $\omega_{i+}\to\omega'$ weakly$^*$ in $A^*$, we have $0\le\omega\le\omega'\in F^*$, whence $\omega\in F^*$.

To show $G^*$ is weakly$^*$ closed, take a bounded net $\omega_i\in G^*$ in the spirit of the Krein-\v Smulian theorem such that $\omega_i\to\omega$ weakly$^*$ in $A^*$.
Then, for each $i$ we have a bounded net $\omega_{ij}\in F^*-A^{*+}$ and a net $\varphi_{ij}\in F^*$ such that $\omega_{ij}\le\varphi_{ij}$ for all $j$ and $\omega_{ij}\to\omega_i$ weakly$^*$ in $A^*$ by definition of $G^*$.
Note that $\omega_{ij}\le\varphi_{ij}$ implies $0\le\omega_{ij+}\le\varphi_{ij}\in F^*$, and $\omega_{ij+}\in F^*$ is a bounded net for each $i$ so that we may assume $\omega_{ij+}\to\omega_i'$ weakly$^*$ in $A^*$ with $\omega_i'\in F^*$.
Again, $\omega_i\le\omega_i'$ implies $0\le\omega_{i+}\le\omega_i'\in F^*$, and $\omega_{i+}\in F^*$ is a bounded net so that we may assume $\omega_{i+}\to\omega'$ weakly$^*$ in $A^*$ with $\omega'\in F^*$.
Then, $\omega\le\omega'\in F^*$ implies that $\omega\in F^*-A^{*+}\subset G^*$.
\end{proof}

\section*{Acknowledgments}
The author is grateful to his advisor Yasuyuki Kawahigashi for his enormous support and encouragement during the master's studies.
The author would also like to thank Narutaka Ozawa for his fruitful comments, particularly pointing out a mistake of the author in the original version, and for extremely valuable suggestions on a simpler proof than the complicated original proof.
Finally the author thanks Hikaru Awazu and Ayoub Hafid for helpful discussions.

\bibliographystyle{alpha}
\bibliography{bib}

\end{document}